\patchcmd{\section}{\scshape}{\bfseries}{}{}
\renewcommand{\@secnumfont}{\bfseries}
\newtheorem{introtheorem}{Theorem}
\newtheorem{introcorollary}[introtheorem]{Corollary}
\newtheorem{introprop}[introtheorem]{Proposition}
\theoremstyle{definition}
\newtheorem{introexample}[introtheorem]{Example}
\newtheorem{introex}[introtheorem]{Exercise}
\newtheorem{introdef}[introtheorem]{Definition}
\newtheorem{introremark}[introtheorem]{Remark}
\theoremstyle{plain}
\newtheorem{theorem}{Theorem}[subsection]
\newtheorem{proposition}[theorem]{Proposition}
\newtheorem{lemma}[theorem]{Lemma}
\theoremstyle{definition}
\newtheorem{definition}[theorem]{Definition}
\newtheorem{remark}[theorem]{Remark}
\newcommand{\Z}{\mathbf{Z}}
\newcommand{\ddet}{\mathrm{ddet}}
\newcommand{\F}{\mathbf{F}}
\newcommand{\Fp}{\F_{\hspace*{-0.15em}p}}
\newcommand{\Fq}{\F_{\hspace*{-0.15em}q}}
\newcommand{\FpM}{\F_{\hspace*{-0.15em}p^M}}
\newcommand{\FpN}{\F_{\hspace*{-0.15em}p^N}}
\newcommand{\A}{\mathbf{A}}
\newcommand{\M}{\mathrm{M}}
\DeclareMathOperator{\End}{\mathrm{End}}
\DeclareMathOperator{\tr}{tr}
\newcommand{\Fix}{\mathrm{Fix}}
\newcommand{\Ga}{\mathbf{G}_a}
\newcommand{\Tr}{\iota}
\renewcommand{\geq}{\geqslant}
\renewcommand{\leq}{\leqslant}
\newcommand{\mynote}[2]{\noindent
{\bfseries\sffamily\scriptsize#1}
{\small$\blacktriangleright$\textsf{\textsl{#2}}$\blacktriangleleft$}}
\newcommand\JB[1]{\mynote{JB}{{\color[rgb]{0.9,0.9,0} #1}}}
\begin{document}

\date{\today}
\title{Multiband linear cellular automata \\ and endomorphisms of algebraic vector groups}
\author[J.~Byszewski]{Jakub Byszewski}
\address{\normalfont Wydzia\l{} Matematyki i Informatyki Uniwersytetu Jagiello\'nskiego, ul.\ S.\ \L ojasiewicza 6, 30-348 Krak\'ow, Polska}
\email{jakub.byszewski@uj.edu.pl}
\author[G.~Cornelissen]{Gunther Cornelissen}
\address{\normalfont Mathematisch Instituut, Universiteit Utrecht, Postbus 80.010, 3508 TA Utrecht, Nederland}
\email{g.cornelissen@uu.nl}
\thanks{JB was supported by the National Science Centre, Poland under grant no.\ 2018/29/B/ST1/01340}

\subjclass[2010]{20G40, 37B15} 
\keywords{\normalfont linear cellular automata, convolutional codes, periodic orbits, algebraic vector groups}

\begin{abstract} \noindent We propose a correspondence between certain multiband linear cellular automata---models of computation widely used in the description of physical phenomena---and endomorphisms of certain algebraic unipotent groups over finite fields. The correspondence is based on the construction of a universal element specialising to a normal generator for any finite field. \\ We use this correspondence to deduce new results concerning the temporal dynamics of such automata, using our prior, purely algebraic, study of the endomorphism ring of vector groups. These produce `for free' a formula for the number of fixed points of the $n$-iterate in terms of the $p$-adic valuation of $n$, a dichotomy for the Artin--Mazur dynamical zeta function, and an asymptotic formula for the number of periodic orbits.  \\ Since multiband linear cellular automata simulate higher order linear automata (in which states depend on finitely many prior temporal states, not just the direct predecessor), the results apply equally well to that class. 
\end{abstract}

\maketitle
 
%\hfill \begin{minipage}[t]{15em}
%{\small To the memory of Yuri Manin, in admiration}
%\end{minipage}

\section*{Introduction}

This paper proposes a bridge between (small parts of) the world of computational physics and the world of algebraic geometry in positive characteristic. 

\subsection*{Context} Cellular automata are concurrent models of computation described in the middle of the 20th century by Ulam and von Neumann \cite{vN}, and are helpful in modelling various biological, chemical, and physical processes, ranging from pigment distribution on seashells to the Belousov–Zhabotinsky reaction, 
which forms the foundation of the construction of a `chemical computer' \cite{ChemComp}. From the perspective of mathematics, computer science and statistical physics, even one-dimensional cellular automata show complex behaviour, as pointed out by Wolfram and others \cite{Wolfram};  the very simple looking rules of certain one-dimensional cellular automata lead to  computational universality in the sense of being able to simulate any Turing machine \cite{Cook}. 

Here, we are concerned with a very simple class of linear one-dimensional cellular automata for which the states are described by elements of an $r$-dimensional vector space 
$V:=\Fp^r$ over a finite field $\Fp$ with $p$ elements, as studied in \cite{Lieven}. Note that by working over $\Fp$, `linear' is the same as `additive', and additive automata over $\FpN^r$ can be seen as linear automata over $\Fp^{Nr}$. Linearity is a restriction: for example, the resulting machines are never universal  \cite[Theorem 4.12]{Lieven}; however, in general, linear cellular automata are transitive dynamical systems with a dense set of periodic points, so chaotic in the sense of Devaney \cite{Favati}. 

An early detailed study of dynamical properties of some examples can be found in \cite{Martin}. A very general framework of group-based automata can be found in \cite[Chapter 8]{Coornaert}, where invertibility and the Garden-of-Eden theorem are discussed (see also \cite[Proposition 3.2]{Lieven}). For $r=1$, Ward connected such automata to number-theoretical dynamical systems, and used this to provide very clean proofs of some properties related to periodic points and entropy \cite{Ward-entropy} (see also \cite[Chapter 9]{BCH}). By contrast, for general $r$ several `bits' (units of information, in our case elements\footnote{Sometimes these are called `pits' for $\Fp$ and `qits' for $\Fq$, but we will not use this terminology.} of $\Fp$) are stored in one state; the corresponding systems are called multiband linear cellular automata \cite{Kari}, and the dynamics of such automata is considered much more complex \cite[p.\ 64]{Grin}.\footnote{We will refrain from referring to $r$ as `dimension', since that word is used in  the theory of automata for the grid size dimension, which in this paper is always $1$; nevertheless, via our correspondence $r$ becomes the dimension of the associated vector group.}  The interest in such multiband automata arises from the fact that they can be used to simulate higher-order linear cellular automata, i.e.\ automata where transition rules depend not only on the immediately preceding configuration, but on finitely many earlier configurations (\cite[Proposition 3.1]{Lieven}, \cite{Den}). In fact, most (bio-)chemical applications use state spaces of greater complexity than merely a single bit. 

In this paper we link the dynamics of such automata to that of endomorphisms of the vector group $\Ga^r(\overline \Fp)$, where $\overline \Fp$ is the algebraic closure of $\Fp$. Results on periodic points for the cellular automaton follow in a straightforward manner from our previous study of dynamics on algebraic groups \cite{BCH}, similarly to how Ward was able to use prior number-theoretical work for $r=1$. Our results apply to general $r \geq 1$ (and thus, also to higher order additive cellular automata over $\Fp$) and allow us to avoid considering special cases, only requiring some algebra as in \cite[\S 5.2]{BCH}. 

\begin{introex} \label{ex1}  
Let $f_n$ denote the number of solutions $x \in \overline \Fp$ of the equation $f^{\circ n}(x)=x$ where $f(x)=x^p+x$, using the notation $f^{\circ n} = f \circ \dots \circ f$ for the $n$-fold iterate of $f$, and let $g_n$ denote the number of sequences $y=(y_i) \in \Fp^{\Z}$ for which $g^{\circ n}(y)=y$ with $g((y_i)) = (y_i+y_{i+1})$;  then $f_n=g_n$ for all $n$.\footnote{\raisebox{0.5em}{\rotatebox{180}{In fact,  $\log_p f_n = n-p^{v_p(n)}$ where $v_p$ is the $p$-adic valuation.}}}
\end{introex}
 
Is the equality between the number of periodic points of given period for the two maps $f$ and $g$ in Exercise \ref{ex1} a coincidence? The map $f$ is an endomorphism of 
 the additive group $\Ga(\overline \Fp)$ and the map $g$ describes a linear cellular automaton with alphabet $\Fp$. As we will see below, there is a general connection between these two concepts that explains the numerical coincidence.

\subsection*{Definitions and main result} Let us specify the definition of the cellular automata we will consider. 
\begin{introdef} Let $S=V^{\Z}$ be the set of bi-infinite sequences $(y_i)=(y_i)_{i \in \Z}$ of elements of the vector space $V:=\Fp^r$, carrying the product topology induced by the discrete topology on $V$. A ($V$-)\emph{multiband linear cellular automaton} is a continuous $\Fp$-linear self-map $g \colon S \rightarrow S$ that commutes with the shift $s \colon S \rightarrow S$ given by $s((y_i))=(y_{i+1})$. We say the automaton is \emph{on the alphabet $\Fp^r$}. 
 We will denote the cellular automaton by the map $g$, or, if we want to stress the space, also by $(S,g)$. 
\end{introdef}

In the terminology of topological dynamics, $g$ is an endomorphism of the full shift on $S$. By the Curtis--Hedlund--Lyndon theorem (\cite{Hedlund}, \cite[Theorem 1.8.1]{Coornaert}) and additivity, $g$ is determined by its \emph{local rule} $$g((y_i))_0 = \sum_{j \in J} m_j y_{j},$$ where $J$ is a finite set (a `window') and $m_j \in \mathrm{End}(V) \cong \M_r(\Fp)$ are matrices. We can consider $g$ as a \emph{convolutional encoder} \cite[\S 1.6]{Marcus} in the following sense: we represent the sequence $(y_i)$ by the bi-infinite power series $Y(Z):=\sum y_i Z^i \in V[\![Z,Z^{-1}]\!]$, and $g$ by multiplication with the Laurent polynomial $$G(Z):=\sum_{j \in J} m_j Z^j \in \mathrm{End}(V)[Z,Z^{-1}].$$ We can rewrite $G(Z)$ as a matrix of Laurent polynomials $G(Z) \in \M_{r \times r}(\Fp[Z,Z^{-1}])$. We will call the cellular automaton \emph{one-sided} if $G(Z) \in \M_{r \times r}(\Fp[Z])$, that is, if it is a matrix of polynomials. 

\begin{introremark} The endomorphism $g$ defines a discrete-time dynamical system on $S$: if we denote by $y^{(t)}=(y^{(t)}_i) \in S$ a state `at time $t \in \{0,1,2,\dots\}$', then we set $y^{(t+1)}=g(y^{(t)})$. The dynamical properties of this system are called `temporal', in order to distinguish them from properties of the entries of $y$ at a fixed time, e.g.\ with respect to the shift map, which are called `spatial'. A state is only dependent on the immediate predecessor in time, but as we remarked above, by allowing $r>1$, we can simulate any higher order linear cellular automaton, i.e.\ an additive automaton in which a state  $y^{(t)}$ depends on finitely many predecessors $y^{(t-1)},\dots,y^{(t-k)}$ in time.  If this dependency is written as $$Y^{(t)}(Z) = \sum_{j=1}^s G_j(Z) Y^{(t-j)}(Z)$$ for $Y^{(t)}(Z) = \sum y_i^{(t)}Z^i$ with $G_j(Z) \in \M_{r \times r}(\Fp[Z,Z^{-1}])$, then the first-order simulation corresponds to the block matrix 
$$ {\tiny \left( \begin{array}{ccccc} G_1(Z) & G_2(Z) &  \ldots & & G_s(Z) \\ I_r & & & &  \\ & I_r & & & \\ & & \tiny{\ddots} & &   \\ & & & I_r &  \end{array} \right)} \in \M_{rs \times rs}(\Fp[Z,Z^{-1}])  \mbox{ acting on } {\tiny \left( \begin{array}{c} Y^{(t-1)}(Z) \\ Y^{(t-2)}(Z)\\   {\vdots} \\Y^{(t-s)}(Z) \end{array} \right)} \in V^s[\![Z,Z^{-1}]\!].$$
\end{introremark}

The other objects we consider are the following. 
\begin{introdef} We let $\Ga$ denote the additive algebraic group, so for a field $K$, $\Ga(K)$ is the additive group of $K$. An algebraic group of the form $\Ga^r$ is called a \emph{vector group} (a particular example of a unipotent algebraic group); its $K$-points form an $r$-dimensional $K$-vector space, and we denote them as \emph{column} vectors. If $K=\overline \Fp$, let $F$ denote the Frobenius operator of raising to the power $p$ (applied to elements of $\overline \Fp$ or to all coordinates of an element of $\Ga^r(\overline \Fp)$). 
 We say that an endomorphism of $\Ga^r$ is \emph{defined over $\Fp$} if it commutes with the Frobenius operator $F\colon \Ga^r(\overline \Fp)\to \Ga^r(\overline \Fp)$.  
 \end{introdef} 
 
 Endomorphisms $\sigma \in \End_{\overline \Fp}(\Ga^r)$ of the vector group $\Ga^r$ over $\overline \Fp$ can be studied using noncommutative algebra: they are described by $r\times r$ matrices $m_\sigma$ with entries from the skew polynomial ring $\overline \Fp\langle \phi \rangle$, where $\phi a = a^p \phi$ for all $a \in \overline \Fp$ \cite[\S 5.2]{BCH}. The endomorphisms defined over $\Fp$ form a subring isomorphic to the matrix ring over the (commutative) polynomial ring $\F_p[\phi]$. This allows us to identify one-sided automata directly with endomorphisms of algebraic groups by identifying the variables $\phi$ and $Z$. 
 
We now describe a less obvious result that gives a direct connection between fixed point sets for multiband linear cellular automata and endomorphisms of vector groups. 
 
 \begin{introdef} Suppose $f \colon X \rightarrow X$ is a self-map on some set $X$. 
 \begin{itemize} \item  We define the \emph{set of fixed points} of $f$ as $\Fix(f)=\{x \in X \colon f(x)=x\}.$ A fixed point of $f^{\circ n}$ for some $n \geq 1$ is called a \emph{periodic point}. 
\item We call the map $f$ \emph{confined} if $\# \Fix(f^{\circ n})$ is finite for all $n \geq 1$. 
\item We denote by $\mathcal P \subseteq S$ the subset of sequences that are periodic for the shift map $s \colon S \rightarrow S$, and call these \emph{shift-periodic sequences}.  
  \end{itemize}
    \end{introdef} 
  Given a cellular automaton $g$, the finite orbit of a periodic point of $g \colon S \rightarrow S$ is sometimes called a `temporal cycle' to distinguish them from cycles for the shift map $s \colon S \rightarrow S$, which are called `spatial cycles'.

\begin{introtheorem} \label{main}
There is an injective group homomorphism $\Tr \colon \Ga^r(\overline \Fp) \rightarrow (\Fp^r)^{\Z}$ such that the following hold:
\begin{enumerate}\item\label{m1} We have $\Tr(Fx)=s(\Tr(x))$, where $F$ is the Frobenius operator, and $s$ the shift. 
\item  \label{m2} The image of $\Tr$ consists exactly of the subgroup $\mathcal P$ of shift-periodic sequences of $(\Fp^r)^{\Z}.$
\item \label{m3} There is a bijection $\sigma \mapsto g_\sigma$ between endomorphisms $\sigma$ of the group $\Ga^r$ defined over $\Fp$ and one-sided $\F^r_p$-multiband linear cellular automata $g_\sigma$ such that for all $n \geq 1$, the map $\iota$ induces a group isomorphism 
\begin{equation} \label{matchp} \Fix(\sigma^{\circ n}) \stackrel{\iota}{\cong} \Fix(g_\sigma^{\circ n}) \cap \mathcal P.  \end{equation} 
\item \label{m4} The endomorphism $\sigma$ is confined if and only if the cellular automaton $g_\sigma$ is confined, in which case all periodic points of $g_\sigma$ are shift-periodic and $\iota$ induces a group isomorphism \begin{equation} \label{match} \Fix(\sigma^{\circ n}) \stackrel{\iota}{\cong} \Fix(g_\sigma^{\circ n}).  \end{equation} 
\end{enumerate}
\end{introtheorem}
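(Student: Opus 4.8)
The plan is to realise $\iota$ through a \emph{compatible system of normal generators}: a family $(w_N)_N$ with $w_N \in \FpN$ such that $\{w_N, w_N^p, \ldots, w_N^{p^{N-1}}\}$ is a normal basis of $\FpN/\Fp$ and $\tr_{\FpN/\FpM}(w_N) = w_M$ whenever $M \mid N$. Granting such a system, each $x \in \overline\Fp$ lies in some $\FpN$ and has a unique expansion $x = \sum_{j=0}^{N-1} c_j(x)\,w_N^{p^j}$ with $c_j(x) \in \Fp$; I set $\iota(x) = (c_{-i \bmod N}(x))_{i \in \Z}$ when $r=1$ and let $\iota$ act coordinatewise on $\Ga^r(\overline\Fp) = (\overline\Fp)^r$. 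The trace compatibility is precisely what makes this independent of $N$: for $x \in \FpM \subseteq \FpN$ with $N = Md$, grouping the exponents shows $\sum_{i=0}^{N-1} c_{i \bmod M}(x)\,w_N^{p^i} = \sum_{a=0}^{M-1} c_a(x)\,\tr_{\FpN/\FpM}(w_N)^{p^a} = \sum_{a=0}^{M-1} c_a(x)\,w_M^{p^a} = x$, so the coefficients computed in $\FpN$ and in $\FpM$ give the same sequence and $\iota$ is a well-defined $\Fp$-linear map. Because $x \mapsto (c_0(x), \ldots, c_{N-1}(x))$ is a bijection $\FpN \to \Fp^N$ — the defining property of a normal basis — the map $\iota$ is injective and carries $\FpN^r$ onto the sequences of period dividing $N$; part~(\ref{m2}) follows since $\mathcal P$ is the union of these. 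Part~(\ref{m1}) is then a one-line check: $Fx = \sum_j c_j(x) w_N^{p^{j+1}} = \sum_j c_{j-1}(x) w_N^{p^j}$ gives $c_j(Fx) = c_{j-1}(x)$, hence $\iota(Fx)_i = c_{-i-1}(x) = \iota(x)_{i+1} = s(\iota(x))_i$.

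The main obstacle is the construction of this universal normal element. A single normal generator of $\FpN/\Fp$ exists by the normal basis theorem, and the trace maps $\FpN \to \FpM$ are surjective, so each compatibility equation is solvable in isolation; the difficulty is to meet the \emph{open} normal-generator condition together with the \emph{linear} trace conditions simultaneously along the entire tower $\{\FpN\}$ ordered by divisibility. I would organise this as a projective system whose stage-$N$ set consists of the normal generators of $\FpN/\Fp$ compatible with all previously chosen $w_M$ for $M \mid N$, and extract a compatible thread by a compactness (K\"onig's lemma) argument. The delicate point — and the heart of the theorem, flagged as a universal construction in the abstract — is to guarantee that these stage sets stay nonempty, i.e.\ that the normal-generator property survives the trace constraints as one climbs the tower rather than being forced to vanish.

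For part~(\ref{m3}) I would invoke the description recalled before the theorem: endomorphisms of $\Ga^r$ defined over $\Fp$ are the matrices $m_\sigma \in \M_{r \times r}(\Fp[\phi])$ with $\phi = F$, while one-sided $\Fp^r$-automata are the matrices $G(Z) \in \M_{r \times r}(\Fp[Z])$, and the identification $\phi \leftrightarrow Z$ gives the bijection $\sigma \mapsto g_\sigma$. The crucial point is that $\iota$ \emph{intertwines} the two: being $\Fp$-linear and satisfying $\iota \circ F = s \circ \iota$ by~(\ref{m1}), it turns each polynomial entry $P(\phi)$ into $P(s)$, so assembling the matrix yields $\iota \circ \sigma = g_\sigma \circ \iota$ (one fixes the indexing of $\iota$ and the convention relating $Z$ to $s^{\pm 1}$ once and for all; reversing the index merely swaps $s$ with $s^{-1}$ and does not affect fixed-point counts). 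The isomorphism \eqref{matchp} is then formal: $\sigma^{\circ n}x = x$ forces $g_\sigma^{\circ n}\iota(x) = \iota(x)$ with $\iota(x) \in \mathcal P$, while any $y \in \Fix(g_\sigma^{\circ n}) \cap \mathcal P$ equals $\iota(x)$ for a unique $x$ by~(\ref{m2}), and injectivity of $\iota$ turns $\iota(\sigma^{\circ n}x - x) = 0$ into $\sigma^{\circ n}x = x$; since $\iota$ is an injective homomorphism, \eqref{matchp} is an isomorphism of groups.

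Finally, part~(\ref{m4}) reduces both confinement statements to a single determinant. A bi-infinite $Y$ is fixed by $g_\sigma^{\circ n}$ exactly when $(G(Z)^n - I)Y = 0$; multiplying by the adjugate gives $\det(G(Z)^n - I)\cdot Y^{(k)} = 0$ in each coordinate. When $\det(G(Z)^n - I) \neq 0$ in $\Fp[Z,Z^{-1}]$, every coordinate obeys a nonzero linear recurrence with invertible extreme coefficients, so the solution space is finite-dimensional and, the recurrence being solvable in both directions from a finite window, consists only of periodic sequences; thus $\Fix(g_\sigma^{\circ n})$ is finite and lies inside $\mathcal P$, whereas if $\det(G(Z)^n - I) = 0$ the kernel is infinite. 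Read through $\phi \leftrightarrow Z$, the same determinant lives in $\Fp[\phi]$ and governs the finiteness of $\Fix(\sigma^{\circ n}) = \ker(\sigma^{\circ n} - \mathrm{id})$, a vector-group endomorphism having finite kernel iff its determinant over $\Fp[\phi]$ is nonzero as in \cite[\S 4.2]{BCH}. Hence $\sigma$ is confined iff $g_\sigma$ is, and in the confined case the inclusion $\Fix(g_\sigma^{\circ n}) \subseteq \mathcal P$ promotes \eqref{matchp} to \eqref{match}. I expect the universal normal element to be the sole genuinely hard ingredient; the remainder is linear algebra over $\Fp[Z,Z^{-1}]$ together with careful bookkeeping of the shift conventions.
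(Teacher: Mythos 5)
Your overall architecture matches the paper's, and most of the steps are sound, but there is one genuine gap: you never establish that the projective system of ``normal generators compatible along the tower'' has nonempty stages, and you explicitly defer this as ``the delicate point''. Without it the universal element $(w_N)$ --- and hence $\iota$ itself --- is not constructed. The paper closes this in one line of module theory: by the normal basis theorem $\Ga(\FpN)$ is a free $\Fp[G_{N,1}]$-module of rank $1$, the normal generators of $\FpN/\Fp$ are exactly the module generators, and $\tr_{N,M}\colon \Ga(\FpN)\to\Ga(\FpM)$ is a surjective module homomorphism (with $\Ga(\FpM)$ viewed as an $\Fp[G_{N,1}]$-module via $\Fp[G_{N,1}]\to\Fp[G_{M,1}]$); surjections send generators to generators, so the trace of a normal generator is again a normal generator, the sets $B_N$ of normal generators form an inverse system of nonempty finite sets, and the limit is nonempty by compactness. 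With that supplied, your coordinate version of $\iota$ (expansion in the normal basis rather than pairing against $\alpha$ via the trace form) is equivalent to the paper's, since the dual basis of a normal basis is again normal, and your verifications of parts (i)--(iii), including the intertwining $\iota\circ\sigma=g_\sigma\circ\iota$, are correct.

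For part (iv) you take a genuinely different route. The paper proves that confinedness of $\sigma$ forces confinedness of $g_\sigma$ via a combinatorial lemma: a subshift of finite type that is a subgroup and meets $\mathcal P$ in a finite set must be finite and contained in $\mathcal P$. You instead multiply $G(Z)^{n}-I$ by its adjugate, so that each coordinate of a fixed sequence satisfies the scalar recurrence given by $\det(G(Z)^{n}-I)$; when this Laurent polynomial is nonzero its extreme coefficients are invertible, the solution space is a finite shift-invariant group and hence consists of shift-periodic sequences, while if it vanishes the kernel contains $\Fp[\![Z,Z^{-1}]\!]\cdot v(Z)$ for a nonzero polynomial kernel vector $v$ and is infinite; matching with $\det(m_{\sigma}^{n}-1)\in\M_r(\Fp[\phi])$ under $\phi\leftrightarrow Z$ then gives both directions of the equivalence at once. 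This is correct and arguably more self-contained than the paper's argument, at the cost of being tied to linearity, whereas the paper's subshift lemma is a general statement about group shifts. Apart from the normal-generator gap above, the proposal constitutes a valid proof.
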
 

The main innovation in the theorem seems to be the link between, on the one hand, points over \emph{the algebraic closure} of $\Fp$ with Galois action and, on the other hand, periodic sequences \emph{with entries in} $\Fp^r$ with the action of the shift map. Being defined over $\Fp$ is natural from the point of view of the correspondence: it means invariance under $F$, which translates to the shift-commutation property of the cellular automaton. The action of the absolute Galois group $\mathrm{Gal}(\overline \Fp/\Fp)$ on the algebraic group corresponds to profinite shifts in the cellular space, cf.\ Remark \ref{defprofshift} below.  Confinedness is required to match sets of fixed points exactly; without this condition, for example, the identity map fixes the countable set $\Ga^r(\overline \Fp)$ on the one side and the uncountable set $S$ on the other side, and the $\overline \Fp$-rational points in the algebraic group correspond to the shift-periodic sequences only.  

\subsection*{Corollaries of the correspondence} Since the group $\Ga^r$ is commutative, the number of fixed points of an endomorphism $\sigma \in \End(\Ga^r)$ in $\Ga^r(\overline \Fp)$ is equal to the cardinality of $\ker(\sigma-1)(\overline \Fp)$, which can be computed as the quotient of the degree of the map $\sigma-1$ by its inseparability degree, and this can be identified with $$p^{\deg_\phi(\ddet(m_{\sigma-1}))-v_\phi(\ddet(m_{\sigma-1}))}$$ where $\ddet$ is the Dieudonn\'e determinant, and $\deg_\phi$ and $v_\phi$ are the degrees of the leading and trailing coefficients in $\phi$ \cite[Lemma 5.2.2]{BCH}. If $\sigma$ is defined over $\Fp$, one may use linear algebra of matrices over (commutative) polynomial rings and the usual determinant rather than the Dieudonn\'e determinant.

We have used these observations in \cite{BCH} to understand the orbit structure of such systems. The results can be directly translated to the setting of \emph{one-sided} cellular automata; to the best of our knowledge, in this generality the results are new. To obtain analogous results for general two-sided automata, we first perform some shifts and reduce to a counting problem for more general coincidence sets rather than periodic points (that are coincidence sets for iterates of a map and the identity map), to which the methods from \cite{BCH} can also be applied.

\begin{introprop} \label{cor1} 
If $g$ is a confined linear multiband cellular automaton on an alphabet $\Fp^r$, then for all $n \geq 1$ we have \begin{equation} \label{fix} \log_p \# \Fix(g^{\circ n}) = na-t_n p^{v_p(n)} \end{equation} for some $a \in \Z_{\geq 0}$ and some periodic sequence $(t_n)$ with $t_n \in \Z_{ \geq 0}$ of period $\varpi$ coprime to $p$ and satisfying $t_n=t_{\mathrm{gcd}(n,\varpi)}$, where $v_p$ is the $p$-adic valuation. The exact value of $a,t_n,\varpi$ is as given in Remark \ref{format}. 
\end{introprop}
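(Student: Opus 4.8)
The plan is to reduce the two-sided counting problem to the one-sided algebraic setting via Theorem~\ref{main}, and then quote the dynamical results from \cite{BCH}. First I would use Theorem~\ref{main}\eqref{m4}: since $g$ is confined, all periodic points are shift-periodic, so $\#\Fix(g^{\circ n}) = \#\Fix(\sigma^{\circ n})$ for the endomorphism $\sigma \in \End(\Ga^r)$ defined over $\Fp$ corresponding to $g$. This replaces the infinite-dimensional cellular-automaton count by the finite count of fixed points of an endomorphism of a vector group, where the commutative algebra of matrices over $\Fp[\phi]$ applies. The subtlety is that the correspondence in Theorem~\ref{main}\eqref{m3} is stated for \emph{one-sided} automata, whereas the proposition allows general two-sided $g$; as indicated in the text preceding the statement, one first performs suitable shifts (multiplying $G(Z)$ by a power of $Z$) to reduce the two-sided $G(Z)\in \M_{r\times r}(\Fp[Z,Z^{-1}])$ to a one-sided matrix, at the cost of replacing the fixed-point count for $g^{\circ n}$ by a coincidence-set count for a pair of maps. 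I would verify that this shift does not change $\#\Fix(g^{\circ n})$ when $g$ is confined, or alternatively carry the coincidence-set count directly.

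Next I would translate the algebraic count into the valuation formula. By the displayed formula in the excerpt, $\#\Fix(\sigma^{\circ n}) = \#\ker(\sigma^{\circ n}-1)$ is $p$ raised to the difference of the $\phi$-degree and the $\phi$-valuation of the Dieudonn\'e determinant (here the ordinary determinant, since $\sigma$ is defined over $\Fp$) of the matrix $m_{\sigma^{\circ n}-1}$. Writing $\det(m_\sigma - T) \in \Fp[\phi][T]$ and specialising, the characteristic-polynomial formalism of \cite{BCH} expresses $\log_p\#\Fix(\sigma^{\circ n})$ in terms of the reciprocal roots of the $\phi$-characteristic polynomial of $\sigma$ and their interaction with the $n$-th iterate. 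The leading term $na$ arises from the $\phi$-degree part, which scales linearly in $n$ (here $a$ is essentially the top $\phi$-degree of $\det(m_\sigma)$, governing the generic growth), while the correction term $t_n p^{v_p(n)}$ arises from the trailing, inseparable part, which feels the $p$-power structure of $n$ through the valuation $v_\phi$.

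The heart of the matter, and the step I expect to be the main obstacle, is establishing the precise shape $na - t_n p^{v_p(n)}$ with $(t_n)$ periodic of period $\varpi$ coprime to $p$ and satisfying $t_n = t_{\gcd(n,\varpi)}$. This is exactly the content of the fixed-point formula already proved in \cite{BCH} for endomorphisms of algebraic groups, so the real work is checking that the hypotheses of that result are met by $m_{\sigma-1}$ and that the invariants $a$, $t_n$, $\varpi$ coming from the group-theoretic side match the claimed format in Remark~\ref{format}. The linear-in-$n$ growth and the $p^{v_p(n)}$ behaviour follow from how degrees and valuations of $\ddet(m_{\sigma^{\circ n}-1})$ depend on $n$: the $\phi$-degree is additive-like and gives $na$, while the inseparability contribution is controlled by how many of the reciprocal roots become $p^{v_p(n)}$-th powers, producing the periodic multiplier $t_n$ with period dividing the order of the relevant roots of unity, which is coprime to $p$. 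I would conclude by reading off $a$, $t_n$, and $\varpi$ from the factorisation of the characteristic polynomial and recording them in Remark~\ref{format}.
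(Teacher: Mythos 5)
Your proposal follows the paper's proof essentially verbatim: reduce the one-sided case to a confined endomorphism $\sigma\in\End(\Ga^r)$ via Theorem~\ref{main}, handle the two-sided case by twisting with a power of the shift so that $\Fix(g^{\circ n})$ becomes the coincidence set $\mathrm{Coin}(\tilde g^{\circ n}, s^{\circ -Mn})$ (the paper takes your second alternative, carrying the coincidence count through $\iota$ to $\mathrm{Coin}(\tilde\sigma^{\circ n},F^{\circ -Mn})$, rather than arguing the count is unchanged), and then extract the shape $na-t_np^{v_p(n)}$ from the degree and $\phi$-valuation of $\det((m_{\tilde\sigma}\phi^M)^n-1)$ via \cite[Lemma 4.2.5]{BCH}. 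This matches the paper's argument in both structure and the results cited.
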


Following Artin and Mazur \cite{AM}, the sequence counting fixed points of iterates of a confined map can be studied using the \emph{dynamical zeta function} $$\zeta_g(z):=\exp \sum_{n \geq 1} \left( \# \Fix(g^{\circ n}) z^n/n \right).$$  By transferring our previous results concerning this zeta function for endomorphisms of additive groups to the setting of cellular automata, we obtain the following new results about cellular automata. (The case $a=0$ needs some prior attention, see Lemma \ref{cora=0}.)  

\begin{introcorollary} \label{cor3} 
Let $g$ be a confined multiband linear cellular automaton on an alphabet $\Fp^r$ with invariants $a$ and $(t_n)$ as in Proposition \ref{cor1}. Then the dynamical zeta function of $g$ is either the rational function $1/(1-p^az)$ or it cannot be analytically continued over any point of the circle of convergence $|z|=1/p^a$. The first case happens if and only if $t_n=0$ for all $n$.
\end{introcorollary}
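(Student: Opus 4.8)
The plan is to work directly from the fixed-point formula of Proposition~\ref{cor1} and to reduce the whole statement to a natural-boundary question for one explicit lacunary series. Writing $w=p^{a}z$ and using $\log_p\#\Fix(g^{\circ n})=na-t_np^{v_p(n)}$, the zeta function becomes $\zeta_g(z)=\exp h(w)$ with
\[ h(w)=\sum_{n\geq1}\frac{w^n}{n\,p^{\,t_np^{v_p(n)}}}. \]
Since the coefficients $p^{-t_np^{v_p(n)}}$ lie in $(0,1]$ and equal $1$ whenever $t_n=0$, while along $n$ coprime to $p$ they are bounded below by $p^{-\max_n t_n}$, the radius of convergence of $h$ (and of $\zeta_g$) is exactly $1$ in $w$, i.e.\ $|z|=1/p^a$. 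If $t_n=0$ for all $n$ then $h(w)=-\log(1-w)$ and $\zeta_g=1/(1-w)=1/(1-p^az)$; conversely, reading off coefficients from $1/(1-p^az)=\sum p^{an}z^n$ forces $\#\Fix(g^{\circ n})=p^{an}$ and hence $t_n\equiv0$. The case $a=0$ is degenerate: as $\#\Fix(g^{\circ n})\geq1$ we need $t_np^{v_p(n)}\leq na=0$, so $t_n\equiv0$ and we are in the first case (this is Lemma~\ref{cora=0}); thus for the remaining, harder direction I may assume $a\geq1$ and that $t_{n_0}\neq0$ for some $n_0$, and must prove that $\zeta_g$ has $|z|=1/p^a$ as a natural boundary.

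Since $\zeta_g=e^{h}$ never vanishes and $w\mapsto w\,h'(w)$ neither creates nor removes boundary singularities (for $w\neq0$), it suffices to show that
\[ \tilde F(w):=w\,h'(w)=\sum_{n\geq1}p^{-t_np^{v_p(n)}}w^n=\sum_{\substack{m\geq1\\ p\nmid m}}\Lambda\!\bigl(p^{-t_m}w^m\bigr),\qquad \Lambda(\xi):=\sum_{k\geq0}\xi^{p^k}, \]
has the unit circle as a natural boundary, where $t_m:=t_{\gcd(m,\varpi)}$ is the common value of $t_n$ on the fibre $n=p^km$. Here $\Lambda$ is the classical Hadamard-gap series, with $|\xi|=1$ as natural boundary. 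I would split $\tilde F=\tilde F_0+\tilde F_{\neq0}$ according to whether $t_m=0$ or $t_m\neq0$. The part $\tilde F_0=\sum_{t_n=0}w^n$ is supported on a union of residue classes modulo $\varpi$ (as $t_n$ depends only on $\gcd(n,\varpi)$), hence is a rational function with poles only among the $\varpi$-th roots of unity; so off $\mu_\varpi$ the singularities of $\tilde F$ are exactly those of $\tilde F_{\neq0}=\sum_{n\in N^\ast}p^{-t_np^{v_p(n)}}w^n$, a series with strictly positive coefficients, where $N^\ast=\{n:t_n\neq0\}$.

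The key concrete step is a multisection: for every $K\geq1$,
\[ \frac{1}{p^{K}}\sum_{\zeta\in\mu_{p^{K}}}\tilde F(r\zeta)=\sum_{p^{K}\mid n}p^{-t_np^{v_p(n)}}r^{n}\;\xrightarrow[r\to1^-]{}\;+\infty, \]
because the terms are positive and, writing $n=p^Kn'$ with $p\nmid n'$, each is bounded below by $p^{-p^{K}\max_n t_n}$ while there are infinitely many such $n'$. As the set of singular points on the circle of convergence is closed, this already forces a singular point inside each $\mu_{p^K}$. The main obstacle, and the true analytic content, is to upgrade this to a \emph{dense} set of singular points (which, being closed, is then all of $|w|=1$). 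The difficulty is that the top-order $r\to1^-$ asymptotics of the residue sums $S_a(r)=\sum_{n\equiv a\,(p^K)}p^{-t_np^{v_p(n)}}r^n$ agree across $a$, so the averaged blow-up above only detects $w=1$; a singularity at a nontrivial $\zeta\in\mu_{p^K}$ depends on the differences $S_a(r)-S_{a'}(r)$, and it is precisely the hypothesis $t_n\not\equiv0$ that must be shown to make these differences unbounded, uniformly enough to fill the circle. This is exactly the dichotomy for the Artin--Mazur zeta function of an endomorphism of a vector group proved in \cite{BCH}; by Theorem~\ref{main}(\ref{m4}) the zeta function $\zeta_g$ of the confined automaton coincides with that of the corresponding endomorphism $\sigma$, so the dichotomy transfers verbatim and yields the natural boundary, completing the proof.
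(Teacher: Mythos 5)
Your handling of the elementary parts is correct and more explicit than the paper's: the identification of the radius of convergence $|z|=1/p^a$, the computation showing that $t_n\equiv 0$ gives $\zeta_g(z)=1/(1-p^az)$, the converse by comparing logarithmic coefficients, and the reduction of the case $a=0$ to Lemma~\ref{cora=0} all match what the paper does (the paper compresses all of this into a citation). But you should be clear that your proposal is not an independent proof of the hard direction: after the multisection computation you explicitly concede that the ``true analytic content'' --- upgrading blow-up along averages over $\mu_{p^K}$ to a dense, hence full, set of singular points on $|w|=1$ --- is not established by your argument and must be imported from \cite{BCH}. That is exactly what the paper does (it invokes \cite[Theorem 12.2.1]{BCH}, noting that the system is hyperbolic with entropy $a\log p$), so your lacunary-series scaffolding, while a reasonable sanity check, is ultimately discarded and the two proofs coincide at the crux.

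There is one genuine imprecision in how you perform the final transfer. You invoke Theorem~\ref{main}\eqref{m4} to say that $\zeta_g$ coincides with the zeta function of ``the corresponding endomorphism $\sigma$''. That bijection $\sigma\mapsto g_\sigma$ exists only for \emph{one-sided} automata, whereas Corollary~\ref{cor3} is stated for arbitrary confined multiband automata; for a genuinely two-sided $g$ there is no $\sigma\in\End(\Ga^r)$ defined over $\Fp$ with $\Fix(g^{\circ n})\cong\Fix(\sigma^{\circ n})$, and the paper instead passes through coincidence sets $\mathrm{Coin}(\tilde\sigma^{\circ n},F^{\circ -Mn})$ in the proof of Proposition~\ref{cor1}. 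The repair is easy and is the route the paper takes: the dichotomy theorem of \cite{BCH} applies to any confined system whose fixed-point counts have the shape $\log_p\#\Fix(g^{\circ n})=na-t_np^{v_p(n)}$ with the stated gcd-periodicity, and Proposition~\ref{cor1} establishes that shape for all (not just one-sided) confined automata. So you should cite the sequence-level dichotomy directly from Proposition~\ref{cor1} rather than routing through an endomorphism that need not exist.
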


\begin{introcorollary} \label{cor2} 
For a confined linear multiband cellular automaton $g$ on an alphabet $\Fp^r$, the number $P_\ell$ of periodic orbits of length $\ell$ is asymptotically equal to 
\begin{equation} \label{PNT} {p^{\ell a - t_{\ell} p^{v_p(\ell)}}}/{\ell}+ O(\sqrt{p^{\ell a}}) \end{equation}  
as $\ell \rightarrow +\infty$, where $a$ and $t_\ell$ are as in Proposition \ref{cor1}. 
\end{introcorollary}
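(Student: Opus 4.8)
The plan is to recover $P_\ell$ from the fixed-point counts $F_n := \# \Fix(g^{\circ n})$ by the classical counting of periodic orbits together with M\"obius inversion, and then to read off the asymptotics directly from the explicit shape of $F_n$ furnished by Proposition \ref{cor1}. Since $g$ is confined, each $F_n$ is a finite positive integer (the fixed-point set of the linear map $g^{\circ n}$ contains at least the zero sequence), and every point fixed by $g^{\circ n}$ lies on a temporal cycle of some length $d\mid n$. Writing $O_d$ for the number of points of exact period $d$, each orbit of length $\ell$ accounts for exactly $\ell$ such points, so $P_\ell=O_\ell/\ell$ and $F_n=\sum_{d\mid n}O_d$. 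M\"obius inversion then gives
$$\ell P_\ell = O_\ell = \sum_{d\mid \ell}\mu(\ell/d)\,F_d = F_\ell + \sum_{\substack{d\mid \ell \\ d<\ell}}\mu(\ell/d)\,F_d,$$
so that the $d=\ell$ term contributes exactly the proposed main term $F_\ell = p^{\ell a - t_\ell p^{v_p(\ell)}}$.

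It then remains to bound the contribution of the proper divisors. Assume first $a\geq 1$. For $d\mid \ell$ with $d<\ell$ we have $d\leq \ell/2$, and Proposition \ref{cor1} together with $t_d\geq 0$ gives $F_d=p^{da-t_d p^{v_p(d)}}\leq p^{da}$. Bounding the (sparse) divisor sum by the complete geometric sum over all integers up to $\ell/2$,
$$\Big|\sum_{\substack{d\mid \ell \\ d<\ell}}\mu(\ell/d)\,F_d\Big| \leq \sum_{d\mid \ell,\,d<\ell} p^{da} \leq \sum_{k=1}^{\lfloor \ell/2\rfloor} p^{ka} \leq \frac{p^a}{p^a-1}\,p^{a\ell/2} = O\!\left(\sqrt{p^{\ell a}}\right),$$
the implied constant depending only on the fixed data $p$ and $a$. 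Dividing by $\ell$ yields $P_\ell = p^{\ell a - t_\ell p^{v_p(\ell)}}/\ell + O(\sqrt{p^{\ell a}})$, as claimed.

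For the degenerate case $a=0$ I would invoke Lemma \ref{cora=0}: positivity of $F_n$ forces $t_n p^{v_p(n)}\leq 0$, hence $t_n=0$ and $F_n=1$ for all $n$. Then $O_\ell=\sum_{d\mid \ell}\mu(\ell/d)$ vanishes for $\ell>1$ and equals $1$ for $\ell=1$, so $P_\ell\in\{0,1\}$; the asserted estimate reads $P_\ell = 1/\ell + O(1)$, which holds trivially since both terms are $O(1)$.

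The only genuinely new ingredient is Proposition \ref{cor1}; everything else is the standard Artin--Mazur passage from fixed points to orbits. The one point demanding a little care---and the main (minor) obstacle---is to see that the proper-divisor sum is $O(\sqrt{p^{\ell a}})$ rather than $O(\tau(\ell)\sqrt{p^{\ell a}})$ with a spurious divisor-count factor. This is precisely why one bounds the divisor sum by the full geometric series $\sum_{k\leq \ell/2}p^{ka}$: its ratio $p^a>1$ makes the top term dominate and absorbs the number of divisors into the $O$-constant. One should also double-check that, via Theorem \ref{main}\eqref{m4}, confinedness guarantees each $F_n$ is finite so that the inversion is legitimate.
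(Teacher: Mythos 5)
Your argument is correct, and it is essentially a self-contained expansion of what the paper delegates to a reference: the paper's proof of Corollary \ref{cor2} consists of the single line ``see \cite[Theorem 11.1.1(i)]{BCH}'', a general prime-orbit theorem for systems whose fixed-point counts have the shape \eqref{fix}, whereas you reprove that special case from scratch. The ingredients are the same ones that theorem uses internally --- the identity $\# \Fix(g^{\circ n})=\sum_{d\mid n}O_d$ (legitimate here because confinedness makes every $\Fix(g^{\circ n})$ finite, via Theorem \ref{main}\eqref{m4}), M\"obius inversion, the bound $F_d\leq p^{da}$ from $t_d\geq 0$, and the absorption of the divisor count into the geometric series $\sum_{k\leq \ell/2}p^{ka}=O(p^{a\ell/2})$ --- and your handling of the degenerate case $a=0$ via Lemma \ref{cora=0} (forcing $t_n=0$, $F_n=1$, hence $P_\ell\in\{0,1\}$) is also sound. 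What the paper's route buys is brevity and uniformity: the cited theorem is stated for general hyperbolic fixed-point sequences and simultaneously yields the finer statements about accumulation points of the orbit-counting function quoted in the subsequent remark. What your route buys is transparency: the reader sees exactly why the error term is $O(\sqrt{p^{\ell a}})$ with no spurious factor $\tau(\ell)$, and no external reference needs to be consulted. The only cosmetic remark is that your error term after division by $\ell$ is in fact $O(\sqrt{p^{\ell a}}/\ell)$, slightly sharper than what the corollary asserts; this is of course still consistent with the stated bound.
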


\begin{introexample} For $r=1$ and $g=F$, Corollary \ref{cor2} is the so-called Prime Polynomial Theorem first mentioned in \S 342--347 of the drafted eighth chapter `Disquisitiones generales de congruentiis' of Gau{\ss}'s Disquisitiones Arithmeticae, cf.\ \cite[Theorem 2.2]{Rosen}, stating that the number of irreducible polynomials of degree $\ell$ over $\Fp$ is asymptotically equal to $p^\ell/\ell+ O(p^{\ell/2})$ as $\ell \rightarrow +\infty$. The dynamical zeta function in this case is the innocuous Weil zeta function of $\A^1_{\Fp}$, given by $1/(1-pz)$. 
This setup corresponds to the cellular automaton with state space $\Fp^{\Z}$ and map $(y_i) \mapsto (y_{i+1})$. 
\end{introexample} 

\begin{introremark}
Equation \eqref{PNT} can be used to study the limit behaviour of the \emph{periodic orbit counting function} defined as  $$\pi(X) = \sum_{\ell \leq X} P_\ell.$$ If $a=0$, there are finitely many periodic points (cf.\ Lemma \ref{cora=0}) and $\pi(X)$ is eventually constant. If $a\neq 0$, then the function $X\pi(X)/p^{aX}$ has a well-defined limit precisely if $t_n=0$ for all $n$, and that limit then equals $p^a/(p^a-1)$. In general, the function  $X \pi(X)/p^{aX}$ has uncountably many accumulation points arising from restricting the values of $X$ to sequences of integers converging simultaneously modulo the period $\varpi$ and $p$-adically. See \cite[\S 12.4 \& 12.5]{BCH} for details. 
\end{introremark}

 \section{Construction and first properties of the map $\iota$} 
 
 \subsection{Traces} 
 
Consider an extension of finite fields ${\FpN}/{\FpM}$ with $M$ dividing $N$. Let $G_{N,M}$ denote its Galois group---cyclic of order $N/M$---, and write $$\tr_{N,M}(x):=\sum\limits_{\tau \in G_{N,M}} \hspace*{-0.5em} \tau(x)$$ for the relative trace map.  This is an additive surjective map  $\Ga(\FpN)  \rightarrow \Ga(\FpM)$ that commutes with the action of the Frobenius operator $F$. Furthermore, the bilinear map $\FpN \times \FpN \rightarrow \FpM$ given by $$\langle x ,y \rangle := \tr_{N,M}(xy)$$ is non-degenerate \cite[\S 2.1.4, Theorem 2.1.84]{HFF}. 

\begin{definition} 
Taking inverse limits in the category of abelian groups with the relative trace maps as connecting homomorphisms, define $$\mathcal F = \varprojlim\limits_{\stackrel{M \mid N}{\tr_{N,M}}} \Ga(\FpN).$$  
\end{definition} 

If $\{G_i\}$ is an inverse system of groups with $G=\varprojlim G_i$, the inverse limit of group rings $\varprojlim \Fp[G_i]$ is denoted by $\Fp[\![G]\!]$ and called the \emph{complete  group ring}. This profinite ring contains the group ring $\Fp[G]$ as a dense subset \cite[\S 5.3]{Ribes}.

 \begin{proposition} 
 $\mathcal F$ is a free $\Fp[\![\widehat \Z]\!]$-module of rank $1$. Thus, there exists $\alpha \in \mathcal F$ that for every $N$ projects to an element $\alpha_N \in \FpN$ that generates a normal basis.  
 \end{proposition}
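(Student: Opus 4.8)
The plan is to reduce the statement to the classical Normal Basis Theorem at each finite level and then glue the resulting normal bases into a single compatible system by a compactness argument. Write $\Gamma_N := \mathrm{Gal}(\FpN/\Fp)$, a cyclic group generated by $F$, and $R_N := \Fp[\Gamma_N]$, so that $\Fp[\![\widehat \Z]\!] = \varprojlim R_N$ by the definition of the complete group ring, the transition maps being induced by the restriction surjections $\Gamma_N \twoheadrightarrow \Gamma_M$ for $M \mid N$ (since $\widehat \Z = \varprojlim \Gamma_N$). By the Normal Basis Theorem each $\FpN$ is a free $R_N$-module of rank $1$, and its free generators are precisely the normal basis generators; denote by $U_N \subseteq \FpN$ the finite, nonempty set of these.

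First I would record the equivariance of the trace. For $g \in \Gamma_N$ with image $\bar g \in \Gamma_M$ and $x \in \FpN$, commutativity of $\Gamma_N$ gives $\tr_{N,M}(g x) = \bar g\, \tr_{N,M}(x)$, so $\tr_{N,M}\colon \FpN \to \FpM$ is a homomorphism of $R_N$-modules, where $R_N$ acts on $\FpM$ through the surjection $R_N \twoheadrightarrow R_M$. Combined with surjectivity of the trace, this shows that $\tr_{N,M}$ carries normal basis generators to normal basis generators: if $\FpN = R_N \alpha_N$, then $\FpM = \tr_{N,M}(\FpN) = R_N\, \tr_{N,M}(\alpha_N) = R_M\, \tr_{N,M}(\alpha_N)$. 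Hence, using transitivity of the trace for compatibility, the $U_N$ form an inverse system of nonempty finite sets, indexed by the positive integers ordered by divisibility, a directed set.

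Next I would invoke the fact that an inverse limit of nonempty finite sets over a directed index set is nonempty. This produces a compatible family $\alpha = (\alpha_N) \in \varprojlim U_N \subseteq \mathcal F$, that is, an element of $\mathcal F$ each of whose projections $\alpha_N$ is a normal basis generator and which satisfies $\tr_{N,M}(\alpha_N) = \alpha_M$. I expect this compactness step to be the crux: one cannot in general lift a normal basis generator at level $M$ to one at level $N$ term by term (this would need surjectivity of $R_N^\times \to R_M^\times$), so the naive inductive choice fails, and it is precisely the finiteness of the $U_N$ together with directedness that forces a coherent choice to exist.

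Finally I would check that $\alpha$ freely generates $\mathcal F$. For each $N$ the map $R_N \to \FpN$, $r \mapsto r\alpha_N$, is an isomorphism because $\alpha_N$ is a normal basis generator, and these isomorphisms commute with the transition maps, since $\tr_{N,M}(r\alpha_N) = \bar r\, \tr_{N,M}(\alpha_N) = \bar r\, \alpha_M$ by the equivariance above. Passing to the inverse limit yields an isomorphism $\Fp[\![\widehat \Z]\!] = \varprojlim R_N \xrightarrow{\ \sim\ } \varprojlim \FpN = \mathcal F$ of $\Fp[\![\widehat \Z]\!]$-modules, given precisely by $r \mapsto r\alpha$. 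Thus $\mathcal F$ is free of rank $1$ on the generator $\alpha$, whose projections are normal bases by construction, which is the assertion.
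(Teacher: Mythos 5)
Your proof is correct and follows essentially the same route as the paper: invoke the Normal Basis Theorem at each finite level, use surjectivity and equivariance of the trace to show the sets of normal basis generators form an inverse system of nonempty finite sets, apply the nonemptiness of such an inverse limit to extract a compatible $\alpha$, and pass the resulting level-wise isomorphisms $R_N \cong \FpN$ to the limit. (Only your parenthetical motivation is slightly off: $R_N^\times \to R_M^\times$ is in fact surjective for these finite commutative rings, so the real obstacle to a naive choice is coherence over the whole divisibility poset rather than failure of lifting along a single step --- but this does not affect the proof.)
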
 
 
 \begin{proof} 
The existence of a normal basis for a finite extension of finite fields \cite[Theorem 5.2.1]{HFF} shows that $\Ga(\FpN)$ is a free $\Fp[G_{N,1}]$-module of rank $1$; a generator of this module is precisely a normal basis generator of $\FpN$ over $\Fp$. 

Let $B_N \subset \Ga(\FpN)$ denote the set of all generators of $\Ga(\FpN)$ as an $\Fp[G_{N,1}]$-module. The map $\tr_{N,M}$ is surjective, and so via this map $\Ga(\FpM)$ can be regarded as a quotient module of $\Ga(\FpN)$ (where $\Ga(\FpM)$ is regarded as a  $\Fp[G_{N,1}]$-module via the surjective map of rings $\Fp[G_{N,1}]\to \F_p[G_{M,1}]$). Thus, for any $\alpha_N \in B_N$ and $M{\mid}N$, $\tr_{N,M}(\alpha_N)$ lies in $B_M$, and so $B_N$ form an inverse system of sets. 

Since the inverse limit of a system of nonempty finite sets is nonempty \cite[3.2.13]{Engelking}, there is an element $\alpha \in \mathcal F$ that maps to some $\alpha_N \in B_N$ for all $N$. Using these compatible generators $\alpha_N$, we construct an isomorphism $$\mathcal F \cong \varprojlim \Fp[G_{N,1}] = 
\Fp[\![\varprojlim G_{N,1}]\!].$$ The space $\mathcal F$ is indeed an $\Fp[\![\widehat \Z]\!]$-module of rank $1$ under the identification of $\widehat \Z$ with the absolute Galois group $\mathrm{Gal}(\overline \Fp/\Fp) = \varprojlim\limits_{N} G_{N,1}$ of $\Fp$.\qedhere
\end{proof} 

\begin{remark} 
In defining $\mathcal F$, we use an inverse limit over trace maps to arrive at an uncountable limit space, which turns out to be a free rank-$1$ module under the action of the complete modular group ring of the absolute Galois group of $\Fp$. By contrast, the direct limit of these spaces under inclusions is $\Ga(\overline \Fp)$, which is a countable abelian group.  We are using $\mathcal F$ as a receptacle for the existence of a compatible set of normal bases for a tower of additive groups of finite fields. 
Finding compatible models of the entire field structure (i.e.\ finding the structure constants of the operation of multiplication on compatible vector space bases) is much more difficult, see \cite[\S 11.7]{HFF}. 
\end{remark}

\subsection{Definition of the map $\iota$} Choose an element $\alpha = \varprojlim \alpha_N \in \mathcal F$ with $\alpha_N \in \FpN$ satisfying $$\tr_{N,M}(\alpha_N)=\alpha_M$$ for all $M,N$ with $M \mid N$. Let $y \in \overline \Fp$. Compatibility with the relative trace maps implies that the expression $\tr_{N,1}(\alpha_N y)$ for any $N$ with $y \in \FpN$ is well-defined and independent of the choice of $N$.  We denote it simply by $\tr(\alpha y).$ 
 
\begin{definition} 
 The map 
 $\Tr_\alpha \colon \Ga^r(\overline \Fp) \rightarrow (\Fp^r)^{\Z}$ is given by $$\Tr_\alpha(x):= (\tr(\alpha x^{p^j}))_{j \in \Z},$$ where we apply the $p$-th power operation and trace map entry-wise to column vectors.  
 \end{definition} 
 
 \begin{proposition} \label{mainprop} 
 The map $\Tr_\alpha$ has the following properties: 
 \begin{enumerate}
 \item \label{1} $\Tr_\alpha$ is a group homomorphism for addition.  
    \item \label{3} $\Tr_\alpha$ is injective. 
 \item \label{2} The image of $\Tr_\alpha$ coincides with the subgroup $\mathcal P$ of all sequences that are periodic for the shift operator in $(\Fp^r)^{\Z}$.
   \item \label{4} If  $F$ denotes the Frobenius operator and $s$ is the shift operation, then $\Tr(Fx)=s(\Tr(x))$.  \end{enumerate} 
  \end{proposition}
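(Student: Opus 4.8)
The plan is to dispatch the two formal properties first and then concentrate on the algebraic heart, namely the description of the image together with injectivity.

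\emph{Homomorphism and Frobenius-intertwining (items 1 and 4).} Both the relative trace $\tr_{N,1}$ and the $p$-power map are additive in characteristic $p$, and addition and the power operation are applied entrywise to column vectors; hence each coordinate $x \mapsto \tr(\alpha x^{p^j})$ of $\Tr_\alpha$ is $\Fp$-linear in $x$, giving item (1). For item (4), I would simply observe that applying $F$ replaces $x^{p^j}$ by $x^{p^{j+1}}$ in every slot, so the $j$-th entry of $\Tr_\alpha(Fx)$ equals the $(j+1)$-st entry of $\Tr_\alpha(x)$, which is exactly $s(\Tr_\alpha(x))$.

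\emph{Reduction to $r=1$ and the inclusion of the image in $\mathcal P$.} Since $\Tr_\alpha$ acts entrywise on the $r$ coordinates of $x$, an element of $(\Fp^r)^{\Z}$ lies in the image, resp.\ triviality of the kernel holds, as soon as the corresponding statements hold for each of the $r$ scalar maps $\overline\Fp \to \Fp^{\Z}$; so I may assume $r=1$. For the image, any $x \in \overline\Fp$ lies in some $\FpN$, whence $x^{p^N}=x$ and the sequence $(\tr(\alpha x^{p^j}))_j$ is invariant under $j \mapsto j+N$; thus $\Tr_\alpha(x)$ is shift-periodic and the image is contained in $\mathcal P$.

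\emph{Injectivity and surjectivity onto $\mathcal P$ (the core).} Fix $N$ and restrict to $x\in\FpN$, so that $\tr(\alpha x^{p^j})=\tr_{N,1}(\alpha_N x^{p^j})$. The key manoeuvre is to use the $F$-invariance of the trace: applying $F^{-j}$ inside $\tr_{N,1}$ gives
$$\tr_{N,1}(\alpha_N x^{p^j}) = \tr_{N,1}\big(\alpha_N^{p^{-j}} x\big) = \langle \alpha_N^{p^{-j}},\, x\rangle,$$
so the entries of $\Tr_\alpha(x)$ are exactly the pairings of $x$ against the Frobenius conjugates of $\alpha_N$. As $j$ ranges over $\Z/N$ these conjugates run through the normal basis $\{\alpha_N^{p^i}\}$; since the trace form is non-degenerate, $\Tr_\alpha(x)=0$ forces $x$ to be orthogonal to a basis, hence $x=0$. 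This gives injectivity (the unrestricted map is injective since any $x$ in its kernel already lies in some $\FpN$). Finally, the restricted map $\FpN \to \{\,(y_j): y_{j+N}=y_j\,\}$ is an injective $\Fp$-linear map between two $\Fp$-spaces of the same cardinality $p^N$, hence a bijection; as every shift-periodic sequence has some period $N$, this yields surjectivity onto all of $\mathcal P$.

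\emph{Main obstacle.} The only non-formal point is the identification of the image with $\mathcal P$, and within it the crucial insight is the rewriting $\tr(\alpha x^{p^j}) = \langle \alpha_N^{p^{-j}}, x\rangle$, which converts the Frobenius orbit $(x^{p^j})_j$ into the list of coordinates of $x$ relative to the normal basis. Everything then reduces to non-degeneracy of the trace form and a dimension count, and I expect no genuine difficulty beyond keeping the period bookkeeping ($j$ modulo $N$, conjugates modulo $N$) consistent.
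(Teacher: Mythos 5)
Your proposal is correct and follows essentially the same route as the paper: items (1) and (4) by direct computation, injectivity via Frobenius-invariance of the trace and non-degeneracy of the trace pairing against the normal basis $\{\alpha_N^{p^{-j}}\}$, and surjectivity onto period-$N$ sequences by a cardinality count. The explicit reduction to $r=1$ is a harmless reorganisation of what the paper does entrywise.
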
 
  
  \begin{proof}
  Part \eqref{1} is clear. For \eqref{3}, suppose $\tr(\alpha x^{p^j})=0$ for some $x \in \FpN$ and all $j$. Then, by invariance of the trace under Frobenius, also $\tr(\alpha^{p^{-j}} x)=0$. Thus $x$ is orthogonal to the set of generators $\alpha_N^{p^{-j}}$ of $\FpN$ as a vector space over $\Fp$, and by the nondegeneracy of the trace form, $x=0$. 
  
  For part \eqref{2}, suppose that $x \in \Ga^r(\overline \Fp)$ belongs to $\Ga^r(\FpN)$. Now $\Tr_\alpha$ induces an injective map from $\Ga^r(\FpN)$ to the set of periodic sequences in $(\Fp^r)^{\Z}$ of period $N$, and this map is also surjective for cardinality reasons. 

  For \eqref{4}, note that $\Tr(F(x)) = (\tr(\alpha F (x^{p^j}))) = (\tr(\alpha x^{p^{j+1}}))=s(\tr(\alpha x^{p^j}))$. 
 \end{proof} 
 
\begin{remark} As the space $(\Fp^r)^{\Z}$ is compact, we like to think of these properties as saying that the shift space over the elementary abelian group of order $p^r$ is a `dynamical compactification' of the points of the affine algebraic group $\Ga^r(\overline \Fp)$ (which correspond to the periodic sequences). 
\end{remark} 

We let $\iota$ denote the map $\iota_\alpha$ for any choice of $\alpha$ as above. The dynamical results will not depend on the many possible choices of $\alpha$. 

\begin{remark} \label{defprofshift} 
The \emph{profinite shift} by $\psi \in \widehat{\Z}$ is the map $s_{\psi} \colon \mathcal P \rightarrow \mathcal P$ given by $$s_{\psi}((y_j))= (y_{j+\psi \bmod{N}})$$ for $(y_j)$ having period $N$. Under our correspondence and the identification of $\widehat{\Z}$ with $\mathrm{Gal}(\overline \Fp/\Fp)$, profinite shifts on periodic sequences correspond to the action of the corresponding element of the absolute Galois group of $\Fp$ on the algebraic group $\Ga^r(\overline \Fp)$.  \end{remark}

 \section{Remaining proofs} 

\subsection{Results concerning fixed points and periodic orbits} 

We start with a lemma about general subshifts of $S=(\Fp^r)^{\Z}$\JB{,} which is not directly related to cellular automata. We use the standard terminology of calling a finite string of symbols from the alphabet $\Fp^r$ a \emph{word}.  A subset $Y$ of $S$ is a \emph{subshift of finite type} if it is defined by a finite list of `excluded' words, in the sense that $Y$ comprises the sequences in $S$ that do not contain any of those excluded words as subwords. 

\begin{lemma} \label{pseudo} Suppose $Y \subseteq S$ is a subshift of finite type such that $Y$ is a subgroup and $Y \cap \mathcal P$ is finite. Then $Y$ is contained in $\mathcal P$  and thus $Y$ is finite. 
\end{lemma}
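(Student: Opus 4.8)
The plan is to reduce everything to the claim that $F$ is finite. Since an SFT is automatically closed and shift-invariant, and since a shift-invariant \emph{finite} set consists entirely of shift-periodic points (each orbit is finite, and the shift is invertible on $S$), the two assertions $F\subseteq\mathcal P$ and ``$F$ is finite'' are equivalent under the hypotheses, and either one forces $F=F\cap\mathcal P$. So it suffices to show that every $x\in F$ is shift-periodic. I would fix a window length $L$ such that membership in $F$ is detected by length-$L$ factors (all forbidden words have length $\le L$), keeping in mind that the subgroup hypothesis is essential: without it an SFT can have non-periodic points yet only finitely many periodic ones (for instance the bi-infinite walks on a graph consisting of two loops joined by a single edge), so the argument must genuinely use additivity.

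First I would produce periodic points by splicing. For $x\in F$, pigeonhole on the finitely many possible length-$L$ windows yields a window value occurring at infinitely many positions $a_1<a_2<\cdots\to+\infty$. For $a_i<a_j$ the sequence $P_{a_i,a_j}$ of period $a_j-a_i$ obtained by repeating $x|_{[a_i,a_j-1]}$ lies in $F$: every length-$L$ window of $P_{a_i,a_j}$, including those straddling the seam, coincides (using that the windows at $a_i$ and $a_j$ agree) with a genuine length-$L$ window of $x$, hence is allowed. Since $F\cap\mathcal P$ is finite, infinitely many of the $P_{a_1,a_j}$ coincide with a single periodic $P^\ast\in F$, and as these agree with $x$ on the growing intervals $[a_1,a_j-1]$ we get $x=P^\ast$ on a right half-line $[N,\infty)$. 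The symmetric argument on the left produces a periodic $Q^\ast\in F$ with $x=Q^\ast$ on $(-\infty,-N]$.

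The crux is to upgrade ``eventually periodic on both sides'' to ``periodic'', and this is where additivity enters. Let $\pi'$ be a common period of $P^\ast$ and $Q^\ast$ and set $d:=x-s^{\pi'}(x)\in F$. On each tail $x$ already has period $\pi'$, so $d$ vanishes on both half-lines and therefore has \emph{finite support}. If $d=0$ then $x=s^{\pi'}(x)$ is periodic and we are done. Otherwise I would derive a contradiction by spreading $d$ out: choosing a large prime $k$ exceeding the width of the support of $d$ plus $L$, the shifts $s^{nk}(d)$ have pairwise non-overlapping supports, so the period-$k$ sequence $\sum_{n\in\Z}s^{nk}(d)$ is well defined; each of its length-$L$ windows is either identically zero or a single shifted copy of a window of $d$, hence allowed, so this sequence lies in $F\cap\mathcal P$. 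These periodic points are pairwise distinct for distinct $k$ (their first nonzero position to the right of the initial copy is $k$), giving infinitely many periodic points and contradicting finiteness of $F\cap\mathcal P$. Hence $d=0$, every $x\in F$ is periodic, and $F=F\cap\mathcal P$ is finite.

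I expect the main obstacle to be the final paragraph: the passage from a two-sidedly eventually periodic point to a genuinely periodic one. The subgroup structure is exactly what makes the difference $x-s^{\pi'}(x)$ an element of $F$ with finite support, and the finite-support-to-many-periodic-points construction is the one point where the linear (additive) nature of the SFT is indispensable; the purely symbolic splicing of the second paragraph, by contrast, is standard and does not use the group law.
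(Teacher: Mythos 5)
Your proof is correct, and it takes a genuinely different route from the paper's. The paper argues by contradiction from a single non-periodic $y\in F$: it uses one subtraction $y-s^{\circ k}y$ to manufacture an arbitrarily long zero block, a second subtraction to produce a configuration $0^s b\, 0^s$ with $b\neq 0$ inside an element of $F$, then periodizes $0^s b$ and contradicts the uniform bound on zero-block lengths in the finitely many nonzero elements of $F\cap\mathcal P$. You instead split the argument into two layers: first a purely symbolic splicing step (repeating a factor between two occurrences of the same length-$L$ window, then invoking finiteness of $F\cap\mathcal P$ to stabilize the resulting periodic points) showing every $x\in F$ is eventually periodic in both directions --- a statement valid for \emph{any} SFT with finitely many periodic points, as your two-loops example rightly emphasizes --- and only then use additivity, once, to form the finitely supported $d=x-s^{\circ\pi'}(x)\in F$ and spread out its translates to produce infinitely many distinct periodic points unless $d=0$. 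This decomposition isolates exactly where the group structure is indispensable and gives the two-sided eventual periodicity as a reusable intermediate fact, at the cost of being somewhat longer; the paper's argument is more of a single ad hoc contradiction but needs only one pigeonhole on windows rather than two half-line stabilizations. The only detail worth adding to your write-up is that in the splicing step you should take the two occurrences $a_i<a_j$ of the repeated window at distance $a_j-a_i\geq L$, so that a length-$L$ window of the periodized sequence straddles at most one seam; since the window recurs at infinitely many positions this is immediate. Everything else (the all-zero words being allowed because $0\in F$, the disjointness of supports for $k$ exceeding the support width plus $L$, and the pairwise distinctness of the resulting periodic points for distinct $k$) checks out.
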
 

\begin{proof} 
Suppose that $Y$ contains a non-shift-periodic sequence $y$. For any $M \geq 0$, $Y$ also contains such a sequence $y'$ with a block $0^M$ of $M$ consecutive zeros; indeed, some word of length $M$ appears twice at different positions in $y$, and by shifting so that these become aligned and then subtracting, the block $0^M$ is found in a sequence $y'$ of the form $y'=y-s^{\circ k} y$ for some appropriate $k$. Note that $y' \in Y$ since $Y$ is a shift-invariant subgroup. We will specify an appropriate value for $M$ later in the proof. 

Let $s$ denote an integer strictly larger than both the size of the largest block of consecutive zeros in the non-zero periodic sequences in $Y$ (which is finite by the assumed finiteness of $Y \cap \mathcal P$), as well as the length of the longest excluded word defining $Y$ (which is finite by the finite-type assumption). 

Since $y$ is not periodic, $y' \neq 0$. Hence either to the left or to the right of the block $0^M$ in $y'$ there will be a non-zero symbol $a$. We assume that such a symbol $a$ occurs to the right (if it occurs to the left, the argument applies mutatis mutandis). Then there is a word of length $s$ that will occur twice at different positions in $y'$ to the right of the position of the chosen $a$, where the two occurrences of that word are at distance at most $p^{rs}$ (by the pigeon-hole principle).  
We again form a sequence $y''=y'-s^{\circ \ell} y' \in Y$ for some $\ell \leq p^{rs}$ that aligns the two occurrences. 

In $y''$, the original block of zeros $0^M$ might be shortened to $0^{M'}$, but in any case, $M' \geq M-p^{rs}$. Assuming $M=p^{rs}+s$, we have $M' \geq s$ and the constructed sequence $y''$ has two blocks of $s$ consecutive zeros. Furthermore, if $b$ is the word occurring in between these two blocks, there is at least one non-zero entry in $b$ since the non-zero symbol $a$ occurs between $0^M$ and the chosen words in $y'$. 

Finally, we construct a bi-infinite sequence $y''' = \dots 0^s b 0^s b 0^s \dots$ by shift-periodically repeating the word $0^s b$. Since $s$ is larger than the maximal length of excluded words defining $Y$ and all words of smaller length occurring in $y'''$ are not excluded since they also occur in $y''$, we have that $y''' \in Y$.  Thus, $y'''$ is a non-zero periodic sequence in $Y$ with a block of more than the maximal permitted number of consecutive zeros (by the choice of $s$), and this is a contradiction. We conclude that $Y\subseteq \mathcal P$. 
\end{proof} 

\begin{proof}[Proof of Theorem \ref{main}] 
The injectivity of $\iota$ and statements \eqref{m1} \and \eqref{m2} were proven in Proposition \ref{mainprop}. For the remaining parts, we first construct the bijection $\sigma \mapsto g_\sigma$. 

Let $\sigma \in \End(\Ga^r)$ be an endomorphism defined over $\Fp$. It is described by a matrix $$m_\sigma = m_0 +m_1 \phi + \cdots + m_N \phi^N \in \M_r(\Fp[\phi])$$ with $m_i \in \M_r(\Fp)$. 
The corresponding cellular automaton $g_\sigma$ on the space $S=(\F^r_p)^{\Z}$ is defined by the rule $$(y_i) \mapsto (m_0 y_i + m_1 y_{i+1} + \cdots + m_s y_{i+N}).$$ Then $\sigma \mapsto g_\sigma$ is a bijection between endomorphisms of $\Ga^r$ defined over $\Fp$ and one-sided linear cellular automata on $S$ (note that it is clear how to construct the inverse map). 

We now consider the matching of fixed points. Suppose $x=(x_1,\dots,x_r)^\intercal \in \Ga^r(\overline \Fp)$ is a fixed point of $m_{\sigma}$, so $m_{\sigma}(x)=x$, i.e.\ 
$$ m_0 x + m_1 x^p + \dots + m_N x^{p^N} = x $$
with the operation of raising to the $p$-th power applied cooordinatewise to $x$. Recall that matrices $m_i$ have entries in $\Fp$, so if we raise this equation to the $p^j$-th power, we find that 
$$ m_0 x^{p^j} + m_1 x^{p^{j+1}} + \dots + m_N x^{p^{j+N}} = x^{p^j} $$
for all $j$. 
Multiplying this equation by $\alpha$ and taking traces, we find
\begin{equation} \label{says} m_0 \tr(\alpha x^{p^j}) + m_1 \tr(\alpha x^{p^{j+1}}) + \dots + m_N  \tr(\alpha x^{p^{j+N}}) = \tr(\alpha x^{p^j}). \end{equation} 
Set $y_j:= \tr(\alpha x^{p^j})$ and recall that \begin{equation} \label{excl} g_\sigma((y_j)) = m_0 y_j + m_1 y_{j+1}+ \dots + m_N  y_{j+N}, \end{equation} Equation \eqref{says} says that 
$ g_\sigma((y_j)) = (y_j), $ so $(y_j)$ is a fixed point of $g_\sigma$. At the same time, by Proposition \ref{mainprop}\eqref{2}, $\iota(x)=y \in \mathcal P$. This shows that 
$\iota(\Fix(\sigma)) \subseteq \Fix(g_\sigma) \cap \mathcal P$. 

For the inclusion in the other direction,  if $g_\sigma(y)=y$ and $y \in \mathcal P$, then $y = \iota(x)$ for some $x \in \Ga^r(\overline \Fp)$ and, reversing the above argument, we see that $\sigma(x)=x$. We can apply the argument to $\sigma^{\circ n}$ for any $n$, and this proves Equation \eqref{matchp}. 

Assume that $g_\sigma$ is confined, meaning that $\Fix(g_\sigma^{\circ n})$ is finite for all $n$. Since $g_\sigma$ commutes with the shift $s$, $\Fix(g_\sigma^{\circ n})$ is a finite shift-invariant subgroup of $S$, and hence consists of shift-periodic sequences. Thus,  $\Fix(g_\sigma^{\circ n}) \subseteq \mathcal P$, and Equation \eqref{match} follows. 

Finally, we show that confinedness of $\sigma$ implies that of $g_\sigma$. Note that $Y:=\Fix(g_\sigma^{\circ n})$ is a shift-invariant subgroup of $S$ with $Y \cap \mathcal P$ finite by Equation \eqref{matchp}.  Furthermore, $Y$ is a subshift of finite type; indeed, let $N$ be as in the local rule \eqref{excl} for $g_\sigma$ and exclude all words of length $\leq nN+1$ that do not map to zero under the local rule for $g_\sigma^{\circ n}-\mathrm{id}$. By Lemma \ref{pseudo}, it follows that $Y$ is finite, which means that $g_{\sigma}$ is confined.
\end{proof} 

In the next proof, we will use the following concept. 

\begin{definition} 
The \emph{coincidence set} $\mathrm{Coin}(f,g)$ of two self-maps $f \colon X \rightarrow X$ and $g \colon X \rightarrow X$ on a set $X$ is defined as $$\mathrm{Coin}(f,g)=\{ x \in X \colon f(x)=g(x) \}.$$ 
\end{definition} 

\begin{proof}[Proof of Proposition \ref{cor1}]
For one-sided automata, by Theorem \ref{main}, we are reduced to proving the statement for confined $\sigma \in \End(\Ga^r)$, and in that case the formula for the number of fixed points is given in \cite[Theorem 5.2.17]{BCH}. In the general case, we can reason as follows: assume that the local rule is given by $g((y_i))_0 = \sum_{k=M}^N m_k y_{k}$ for some $M,N \in \Z$ and some matrices $m_k \in \M_r(\Fp)$. Without loss of generality, we can assume $M<0$ (possibly allowing $m_k=0$ for some $k$). Consider $\tilde g = gs^{-M}$; then $$\Fix(g^{\circ n}) =  \mathrm{Coin}(\tilde g^{\circ n}, s^{\circ -Mn}).$$ Note that this set is finite (by the assumption that $g$ is confined) and shift-periodic (by the shift-commuting property of $g$); it follows that the set consists of shift-periodic sequences only.
Therefore, letting $\tilde \sigma \in \End(\Ga^r)$ be such that $g_{\tilde \sigma} = \tilde g$ and using that $s$ corresponds to $F$ on the algebraic group side, we find, by the same argument used to prove Theorem \ref{main}\eqref{m4}, that 
$$ \mathrm{Coin}(\tilde g^{\circ n}, s^{\circ -Mn}) \stackrel{\iota}{=}  \mathrm{Coin}(\tilde \sigma^{\circ n}, F^{\circ -Mn}). $$ Thus, we can compute 
\begin{align} \label{fixct} 
\log_p \#\Fix(g^{\circ n}) & = 
\log_p \# \mathrm{Coin}(\tilde g^{\circ n}, s^{\circ -Mn}) \nonumber \\ & = \log_p \#   \mathrm{Coin}(\tilde \sigma^{\circ n}, F^{\circ -Mn}) \nonumber \\  &  =  \log_p \# \ker ({\tilde \sigma}^{\circ n} - F^{\circ -Mn} | \Ga^r(\overline \Fp)) \nonumber \\ & \stackrel{(\dagger)}{=}   \deg_{\phi} (\det(m_{\tilde \sigma}^n - \phi^{-Mn})) - v_{\phi} (\det(m_{\tilde \sigma}^n - \phi^{-Mn})) \nonumber \\
& = -Mnr + \deg_{\phi}(\det((m_{\tilde \sigma}\phi^M)^n-1)) + Mnr - v_{\phi} (\det((m_{\tilde \sigma} \phi^M)^n - 1)). 
 \end{align}
Equality $(\dagger)$ holds for the following reason: (a) for an endomorphism $\tau$ of a commutative algebraic group with finite kernel (equivalently, a surjective endomorphism), $\# \ker(\tau)$ is the quotient of the degree of $\tau$ by the inseparability degree of $\tau$ \cite[Formula (4.1)]{BCH}; (b) the map $\tau = {\tilde \sigma}^{\circ n} - F^{\circ -Mn} \in \End(\Ga^r)$ has finite kernel, since by the above formula its cardinality equals $\#\Fix(g^{\circ n})$ and we assumed $g$ to be confined; (c) we can apply the formula expressing the degree and inseparability degree of an endomorphism of a vector group in terms of the valuation of the determinant given in \cite[Lemma 5.2.2]{BCH} (notice that since we work over $\Fp$, the Dieudonn\'e determinants in that reference can be replaced by usual determinants). 

We now apply \cite[Lemma 5.2.5]{BCH} to the valuation $v_{\phi^{-1}}=-\deg_{\phi}$ and $v_{\phi}$ and for matrices over the field of rational functions $\Fp(\phi)$. For $v$ either one of these valuations, we find that 
$$v(\det((m_{\tilde \sigma} \phi^M)^n - 1)) = - a_v n + t_n^v p^{v_p(n)}$$ for some $a_v \in \Z_{\geq 0}$ and periodic sequence $(t^v_n)$ with $t^v_n \in \Z_{ \geq 0}$ having period $\varpi_v$ coprime to $p$ and satisfying $t^v_n=t^v_{\mathrm{gcd}(n,\varpi_v)}$. Equation \eqref{fix} follows by adding together  these formul\ae.  
\end{proof} 

\begin{remark} \label{format} 
From the proof of \cite[Lemma 5.2.5]{BCH} emerge the following exact formulas for $a, t_n$ and $\varpi$. Choose some extensions $\tilde{v}_{\phi^{-1}}$ and $\tilde{v}_{\phi}$ of the valuations $v_{\phi^{-1}}=-\deg_{\phi}$ and $v_{\phi}$ to a fixed algebraic closure $\overline{\Fp(\phi)}$, and let $\lambda_1,\ldots, \lambda_r\in \overline{\Fp(\phi)}$ denote the eigenvalues of $m_{\tilde \sigma} \phi^M$, taken with multiplicities.  Note that, by the assumed confinedness of $g$, none of the $\lambda_i$ is a root of unity. 
The formula for $a$ is \begin{equation} \label{forma} a=\sum_i \max(-\tilde{v}_{\phi^{-1}}(\lambda_i),0) + \max(-\tilde{v}_{\phi}(\lambda_i),0).\end{equation}
Given a valuation $v$ on $\overline{\Fp(\phi)}$ and an element $\lambda \in \overline{\Fp(\phi)}$, let $$m^v(\lambda):= \left\{ \begin{array}{ll} \min \{ m \in \Z_{>0} \colon v(\lambda^m-1)>0 \} & \mbox{ if it exists, } \\ 
\infty & \mbox{ otherwise.} 
\end{array} \right.
$$ The formula for $t_n$ is 
\begin{equation} \label{formtn}
t_n=\sum_{v \in \{\tilde v_\phi,\tilde v_{\phi^{-1}}\}}\, \sum_{m^v(\lambda_i) \mid n} v(\lambda_i^{m^v(\lambda_i)}-1),  
\end{equation}
with the convention that $\infty{\mid}n$ holds for no integer $n$. 
Finally, the period $\varpi$ is 
\begin{equation} \label{formvarpi} 
\varpi = \mathrm{lcm} \{ m^v(\lambda_i) \colon m^v(\lambda_i) \neq \infty, v \in \{\tilde v_\phi,\tilde v_{\phi^{-1}}\}, i=1,\dots,r \}. 
\end{equation}
\end{remark}

The next lemma describes when $a=0$, a case that will deserve special attention in subsequent proofs.

\begin{lemma} \label{cora=0}
Let $g$ be a confined multiband linear cellular automaton on an alphabet $\Fp^r$ with invariants $a$ and $(t_n)$ as in Proposition \ref{cor1}. The following conditions are equivalent:\begin{enumerate}
	\item $a=0$;
	\item the zero sequence is the unique periodic point of the automaton;
	\item some power $g^{\circ n}$ of the map $g$ is the zero map.  
\end{enumerate}
\end{lemma}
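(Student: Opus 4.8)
The plan is to prove the three conditions equivalent by establishing the cyclic chain of implications $(2)\Rightarrow(1)\Rightarrow(3)\Rightarrow(2)$. Two of these links are essentially formal, while $(1)\Rightarrow(3)$ carries all the arithmetic content and is where I expect the real work to lie.

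For $(2)\Rightarrow(1)$, I would use that the set of periodic points of $g$ equals $\bigcup_{n\geq 1}\Fix(g^{\circ n})$, so that condition (2) forces $\Fix(g^{\circ n})=\{0\}$, and hence $\log_p\#\Fix(g^{\circ n})=0$, for every $n$. Substituting into the formula $\log_p\#\Fix(g^{\circ n})=na-t_np^{v_p(n)}$ of Proposition \ref{cor1} gives $na=t_np^{v_p(n)}$ for all $n$. Restricting to the infinitely many $n$ coprime to $p$, where $v_p(n)=0$, yields $na=t_n$; since $(t_n)$ is periodic, hence bounded, while $n$ is unbounded, this is only possible if $a=0$. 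For $(3)\Rightarrow(2)$, suppose $g^{\circ n}=0$ and let $y$ be any periodic point, say $g^{\circ m}(y)=y$. Iterating gives $g^{\circ mn}(y)=y$, whereas $g^{\circ mn}=(g^{\circ n})^{\circ m}=0$; hence $y=0$, so the zero sequence is the unique periodic point.

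The heart of the proof is $(1)\Rightarrow(3)$. Here I would work with the one-sided automaton $\tilde g=gs^{-M}$ and the matrix $B:=m_{\tilde\sigma}\phi^M$ from the proof of Proposition \ref{cor1}, whose eigenvalues $\lambda_1,\dots,\lambda_r\in\overline{\Fp(\phi)}$ are the quantities appearing in Remark \ref{format}. Since $s$ is invertible, $g^{\circ n}=\tilde g^{\circ n}s^{Mn}$, so $g^{\circ n}=0$ if and only if $\tilde g^{\circ n}=0$; and because $\tilde g$ is one-sided, $\tilde g^{\circ n}$ corresponds to $m_{\tilde\sigma}^n$, so some iterate vanishes exactly when $m_{\tilde\sigma}$ — equivalently $B$, which differs by the unit $\phi^M$ — is nilpotent, i.e.\ when every $\lambda_i=0$. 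Thus it suffices to deduce $\lambda_i=0$ from $a=0$. Reading formula \eqref{forma}, the hypothesis $a=0$ forces each nonnegative summand to vanish, so $\tilde v_\phi(\lambda_i)\geq 0$ and $\tilde v_{\phi^{-1}}(\lambda_i)\geq 0$ for every $i$. On the other hand $B\in\M_r(\Fp[\phi,\phi^{-1}])$, so its characteristic polynomial is monic with coefficients in $\Fp[\phi,\phi^{-1}]$ and each $\lambda_i$ is integral over $\Fp[\phi,\phi^{-1}]$; consequently $\tilde v(\lambda_i)\geq 0$ at every place of $\overline{\Fp(\phi)}$ lying over a finite place of $\Fp(\phi)$ other than $\phi=0$. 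Combining this with the two inequalities supplied by $a=0$, which cover exactly the places $\phi=0$ and $\phi=\infty$, we conclude that $\lambda_i$ has nonnegative valuation at every place of $\overline{\Fp(\phi)}$, and is therefore a constant, $\lambda_i\in\overline{\Fp}$. Every nonzero element of $\overline{\Fp}$ is a root of unity, but by confinedness no $\lambda_i$ is a root of unity (Remark \ref{format}); hence $\lambda_i=0$ for all $i$, as required.

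The main obstacle is this last implication, and within it the genuinely nontrivial point is the function-field fact that an element of $\overline{\Fp(\phi)}$ with nonnegative valuation at every place must be algebraic over $\Fp$. This is where two-sidedness really matters: for Laurent-polynomial matrices the eigenvalues are only integral away from $\phi=0$ and $\phi=\infty$, so one genuinely needs \emph{both} valuation inequalities extracted from $a=0$ to close the gap at those two places (in the one-sided case $M=0$ and integrality over $\Fp[\phi]$ already handles $\phi=0$). I should take care to apply \eqref{forma} with a fixed choice of extensions $\tilde v_\phi,\tilde v_{\phi^{-1}}$ and to invoke integral closedness of the relevant valuation rings with the correct normalization, but these are routine once the structure above is in place.
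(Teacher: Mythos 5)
Your proof is correct and follows essentially the same route as the paper's: formula \eqref{forma} plus integrality of the $\lambda_i$ over $\Fp[\phi,\phi^{-1}]$ and the ``no poles anywhere implies constant'' fact force $\lambda_i\in\overline{\Fp}$, confinedness rules out roots of unity, and nilpotency of $m_{\tilde\sigma}$ gives that a power of $g$ vanishes, with the remaining implications formal. The one step you defer as routine---passing from the two \emph{chosen} extensions $\tilde v_\phi,\tilde v_{\phi^{-1}}$ to \emph{all} places of $\overline{\Fp(\phi)}$ above $\phi=0$ and $\phi=\infty$---is exactly what the paper dispatches by noting that the multiset $\{\lambda_i\}$ is Galois-invariant, so nonnegativity for all $i$ at one extension gives it at every conjugate extension.
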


\begin{proof} 
From Equation \eqref{forma}, we see that $a=0$ if and only if $\tilde{v}_{\phi^{-1}}(\lambda_i)\geq 0$ and $\tilde{v}_{\phi}(\lambda_i)\geq 0$ for every $i$. Since $\{\lambda_i\}$ is invariant under the action of the absolute Galois group, this implies that $\tilde{v}(\lambda_i)\geq 0$ for every valuation $\tilde{v}$ of $\overline{\Fp(\phi)}$ lying above $v_{\phi^{-1}}$ and $v_{\phi}$. Since $\lambda_i$ are zeros of a monic polynomial over $\Fp[\phi,\phi^{-1}]$, we automatically have $\tilde{v}(\lambda_i)\geq 0$ for any valuation lying above the remaining valuations $v\neq v_{\phi^{-1}}, v_{\phi}$ of $\Fp[\phi]$. By the product formula, whenever $\lambda_i\neq 0$, the sum of all valuations of $\lambda_i$ is zero. 

Thus, $a=0$ if and only if for all $i$ either $\lambda_i=0$ or $\tilde{v}(\lambda_i)=0$ for all valuations $\tilde{v}$ of $\overline{\Fp(\phi)}$. The latter condition simply means that $\lambda_i$ lies in $\overline{\F_p}$. However, $\lambda_i$ cannot take values in $\overline{\Fp}^*$ by confinedness. Hence, $a=0$ if and only if $\lambda_i=0$ for all $i$, which is equivalent to the matrix $m_{\tilde \sigma} \phi^M$ (or simply $m_{\tilde \sigma}$) being nilpotent. Thus, $a=0$ implies that some power of $g$ is the zero map. 

Conversely, if some power of $g$ is the zero map, the constant zero sequence is the only periodic point, and so $\# \Fix(g^{\circ n}) = 1$ for all $n$, which implies that $a=0$.
\end{proof}

\begin{proof}[Proof of Corollaries \ref{cor3} and  \ref{cor2}]
This follows from Lemma \ref{cora=0} if $a=0$, so let $a \neq 0$. 
For the dichotomy for the zeta function in Corollary \ref{cor3}, see \cite[Theorem 13.2.1]{BCH} (note that our system is hyperbolic with `entropy' $a \log p$ in the sense of that reference). 
For Corollary \ref{cor2} concerning the orbit length formula, see \cite[Theorem 12.2.1(i)]{BCH}; this applies in the general (not necessarily one-sided) case because the formula \eqref{fix} holds in that generality too. 
\end{proof}

\section{Open problems} 

We finish the paper with a few open problems and questions related to extending the `bridge' between the theory of algebraic groups and cellular automata. 

\begin{enumerate}
\item A vector group is a particular kind of unipotent group. If $G$ is a general connected unipotent algebraic group and $\sigma \in \End(G)$ is a surjective algebraic group endomorphism, there exists a finite filtration of $G$ by characteristic (hence $\sigma$-invariant) algebraic subgroups such that all quotients are isomorphic to vector groups $\Ga^r$ (of possibly different dimensions $r$), and $\sigma$  induces an endomorphism of those \cite[Chapter 8]{BCH}. Furthermore, the number of fixed points of $\sigma$ on $G$ (if finite) is the product of that on the different quotients \cite[Proposition 4.8.1]{BCH}. Thus, the study of dynamics of endomorphisms $\sigma$ of unipotent algebraic groups (which can be assumed to be surjective by replacing $G$ by $\sigma^n G$ for suitable $n$) is reduced to the case where the group is a vector group. For a general unipotent group $G$ with an endomorphism $\sigma$ defined over $\Fp$, is there a generalisation of additive cellular automata that are filtered by structures whose quotients are linear cellular automata, corresponding to the filtration of $G$ with vector group quotients stable by $\sigma$? 
\item Can one generalise the main theorem from vector groups to more general algebraic groups? For which algebraic groups $G$ is there a map $\iota \colon G(\overline \Fp) \rightarrow G(\Fp)^{\Z}$ with similar properties? It seems best to specialise first to the case where $G$ is commutative or where $G$ is unipotent. In other examples, such as  $G=\mathrm{SL}_2$ for $p=2$, any group homomorphism $\iota \colon \mathrm{SL}_2(\overline{\F_{\hspace*{-0.15em}2}}) \rightarrow \mathrm{SL}_2(\F_{\hspace*{-0.15em}2})^{\Z}$ is trivial (this is because for $N\geq 2$ the group $\mathrm{SL}_2(\F_{\hspace*{-0.15em}2^N}) \cong \mathrm{PSL}_2(\F_{\hspace*{-0.15em}2^N})$ is simple, and so any homomorphism $\mathrm{SL}_2(\F_{\hspace*{-0.15em}2^N}) \to \mathrm{SL}_2(\F_{\hspace*{-0.15em}2}) \cong S_3$ is trivial).  
\item What happens if $\overline \Fp$ is replaced by $\overline K$ for an arbitrary field $K$ of characteristic $p>0$ with $r>1$? The results and proofs from \cite{BCH} do not apply anymore. In this case, it might be interesting to construct an appropriate theory for cellular automata and then apply it to the algebraic group side, reversing the philosophy of this paper.
\end{enumerate}

\bibliographystyle{amsplain}

\begin{thebibliography}{10}

\bibitem{AM}
Michael Artin and Barry~C. Mazur, \emph{On periodic points}, Ann. of Math. (2)
  \textbf{81} (1965), 82--99.

\bibitem{BCH}
Jakub Byszewski, Gunther Cornelissen, and Marc Houben, \emph{Dynamics of
  endomorphisms of algebraic groups}, monograph preprint
  arXiv:2209.00085v2, 176 pp., 2024.

\bibitem{Coornaert}
Tullio Ceccherini-Silberstein and Michel Coornaert, \emph{Cellular automata and
  groups}, Springer Monographs in Mathematics, Springer-Verlag, Berlin, 2010.

\bibitem{Cook}
Matthew Cook, \emph{Universality in elementary cellular automata}, Complex
  Systems \textbf{15} (2004), no.~1, 1--40.

\bibitem{Grin}
Alberto Dennunzio, Enrico Formenti, Darij Grinberg, and Luciano Margara,
  \emph{An efficiently computable characterization of stability and instability
  for linear cellular automata}, J. Comput. System Sci. \textbf{122} (2021),
  63--71.

\bibitem{Den}
Alberto Dennunzio, Enrico Formenti, Luca Manzoni, Luciano Margara, and
  Antonio~E. Porreca, \emph{On the dynamical behaviour of linear higher-order
  cellular automata and its decidability}, Information Sciences \textbf{486}
  (2019), 73--87.

\bibitem{ChemComp}
Marta Due{\~n}as-D{\'{\i}}ez and Juan P{\'e}rez-Mercader, \emph{How chemistry
  computes: Language recognition by non-biochemical chemical automata. {F}rom
  finite automata to {T}uring machines}, i{S}cience \textbf{19} (2019),
  514--526.

\bibitem{Engelking}
Ryszard Engelking, \emph{General topology}, second ed., Sigma Series in Pure
  Mathematics, vol.~6, Heldermann Verlag, Berlin, 1989.

\bibitem{Favati}
Paola Favati, Grazia Lotti, and Luciano Margara, \emph{Additive one-dimensional
  cellular automata are chaotic according to {D}evaney's definition of chaos},
  Theoret. Comput. Sci. \textbf{174} (1997), no.~1-2, 157--170.

\bibitem{Hedlund}
Gustav~A. Hedlund, \emph{Endomorphisms and automorphisms of the shift dynamical
  system}, Math. Systems Theory \textbf{3} (1969), 320--375.

\bibitem{Kari}
Jarkko Kari, \emph{Linear cellular automata with multiple state variables},
  S{TACS} 2000 ({L}ille), Lecture Notes in Comput. Sci., vol. 1770, Springer,
  Berlin, 2000, pp.~110--121.

\bibitem{Lieven}
Lieven Le~Bruyn and Michel Van~den Bergh, \emph{Algebraic properties of linear
  cellular automata}, Linear Algebra Appl. \textbf{157} (1991), 217--234.

\bibitem{Marcus}
Douglas Lind and Brian Marcus, \emph{An introduction to symbolic dynamics and
  coding}, Cambridge University Press, Cambridge, 1995.

\bibitem{Martin}
Olivier Martin, Andrew~M. Odlyzko, and Stephen Wolfram, \emph{Algebraic
  properties of cellular automata}, Comm. Math. Phys. \textbf{93} (1984),
  no.~2, 219--258.

\bibitem{HFF}
Garry~L. Mullen and Daniel Panario (eds.), \emph{Handbook of finite fields},
  Chapmann and Hall/CRC Press, Hoboken, NJ, 2013.

\bibitem{Ribes}
Luis Ribes and Pavel Zalesskii, \emph{Profinite groups}, second ed., Ergebnisse
  der Math.\ und ihrer Grenzgebiete. 3. Folge, vol.~40, Springer-Verlag, Berlin,
  2010.

\bibitem{Rosen}
Michael Rosen, \emph{Number theory in function fields}, Graduate Texts in
  Mathematics, vol. 210, Springer-Verlag, New York, 2002.

\bibitem{vN}
John von Neumann, \emph{Theory of self-reproducing automata}, University of
  Illinois Press, Urbana, 1966, edited and completed by {A}rthur {W}.~{B}urks.

\bibitem{Ward-entropy}
Thomas~B. Ward, \emph{Additive cellular automata and volume growth}, Entropy
  \textbf{2} (2000), no.~3, 142--167.

\bibitem{Wolfram}
Stephen Wolfram, \emph{Statistical mechanics of cellular automata}, Rev. Modern
  Phys. \textbf{55} (1983), no.~3, 601--644.

\end{thebibliography}

\end{document}